 \newtheorem{thm}{Theorem}[section]
 \newtheorem{cor}[thm]{Corollary}
 \newtheorem{lem}[thm]{Lemma}
 \theoremstyle{definition}
 \theoremstyle{remark}
 \numberwithin{equation}{section}
\newcommand{\A}{X}
\newcommand{\C}{\mathcal{C}}
\newcommand{\F}{Q}
\newcommand{\M}{\mathcal{M}}
\newcommand{\T}{\mathcal{T}}
\newcommand{\D}{D:\mathcal{T}\rightarrow \mathcal{M}}
\newcommand{\de}{d:\mathcal{T}\rightarrow \mathcal{M}}
\newcommand{\al}{\alpha:\mathcal{T}\rightarrow \mathcal{M}}
\begin{document}

\title[Jordan derivations on block upper triangular matrix algebras]
 {Jordan derivations on block upper triangular matrix algebras}

\author{ Hoger Ghahramani}

\thanks{{\scriptsize
\hskip -0.4 true cm \emph{MSC(2010)}: 16W25; 47B47; 16S50; 15B99.
\newline \emph{Keywords}: Jordan derivation; block upper triangular matrix algebra.\\}}

\address{Department of
Mathematics, University of Kurdistan, P. O. Box 416, Sanandaj,
Iran.}

\email{h.ghahramani@uok.ac.ir; hoger.ghahramani@yahoo.com}

\address{}

\email{}

\thanks{}

\thanks{}

\subjclass{}

\keywords{}

\date{}

\dedicatory{}

\commby{}

%%% ----------------------------------------------------------------------

\begin{abstract}
We provide that any Jordan derivation from the block upper
triangular matrix algebra $\T = \T(n_{1},n_{2}, \cdots ,
n_{k})\subseteq M_{n}(\mathbb{\C})$ into a $2$-torsion free unital
$\T$-bimodule is the sum of a derivation and an antiderivation.
\end{abstract}

%%% ----------------------------------------------------------------------
\maketitle
%%% ----------------------------------------------------------------------

\section{Introduction}
Throughout this paper $\C$ will denote a commutative ring with
unity. Let $\mathcal{A}$ be an algebra over $\C$. Recall that a
$\C$-linear map $D$ from $\mathcal{A}$ into an
$\mathcal{A}$-bimodule $\M$ is said to be a \emph{Jordan
derivation} if $D(ab+ba)=D(a)b+aD(b)+D(b)a+bD(a)$ for all
$a,b\in\mathcal{A}$. It is called a \emph{derivation} if
$D(ab)=D(a)b+aD(b)$ for all $a,b\in\mathcal{A}$. If $D$ is only
additive, we will call $D$ is an \emph{additive (Jordan)
derivation}. For an element $m\in \M$, the mapping
$I_{m}:\mathcal{A}\rightarrow \M$, given by $I_{m}(a) = am-ma$,
is a derivation which will be called an \emph{inner derivation}.
Also $D$ is called an \emph{antiderivation} if
$D(ab)=D(b)a+bD(a)$ for all $a,b\in\mathcal{A}$. Clearly, each
derivation or antiderivation is a Jordan derivation. The converse
is, in general, not true (see \cite{Ben0}).
\par
The question under what conditions that a map becomes a
derivation attracted much attention of mathematicians and hence
it is natural and interesting to find some conditions under which
a Jordan derivation is a derivation. Herstein\cite{Her} proved
that every additive Jordan derivation from a $2$-torsion free
prime ring into itself is an additive derivation.
Bre$\check{\textrm{s}}$ar \cite{Bre} proved that Herstein�s
result is true for 2-torsion free semiprime rings. Sinclair
\cite{Sin} proved that every continuous Jordan derivation on
semisimple Banach algebras is a derivation. Johnson showed in
\cite{Jo} that a continuous Jordan derivation from a
$C^{*}$-algebra $\mathcal{A}$ into a Banach
$\mathcal{A}$-bimodule is a derivation. Zhang in \cite{Zh} proved
that every Jordan derivation on nest algebras is an inner
derivation. Li and Lu \cite{Li} showed that every additive Jordan
derivation on reflexive algebras is an additive derivation which
generalized the result in \cite{Zh}. By a classical result of
Jacobson and Rickart \cite{Jac} every additive Jordan derivation
on a full matrix ring over a $2$-torsion free unital ring is an
additive derivation. In \cite{Gh}, the author proved that any
additive Jordan derivation from a full matrix ring over a unital
ring into any of its $2$-torsion free bimodule (not necessarily
unital) is an additive derivation which generalized the result in
\cite{Jac}. Benkovi$\check{\textrm{c}}$ \cite{Ben0} determined
Jordan derivations on triangular matrices over commutative rings
and proved that every Jordan derivation from the algebra of all
upper triangular matrices into its arbitrary unital bimodule is
the sum of a derivation and an antiderivation. Zhang and Yu
\cite{Zh2} showed that every Jordan derivation of triangular
algebras is a derivation.
\par
In this note we prove that any Jordan derivation from the block
upper triangular matrix algebra $\T = \T(n_{1},n_{2}, \cdots ,
n_{k})\subseteq M_{n}(\mathbb{\C})$ into a $2$-torsion free unital
$\T$-bimodule is the sum of a derivation and an antiderivation,
where $\C$ is a commutative ring with unity. This result
generalizes the main result of \cite{Ben0}. Also our proof is
elementary, constructive and straightforward.
%******************************************************************************************************
\section{Preliminaries}
Throughout this paper, by $M_{n}(\C)$, $n\geq 1$, we denote the
algebra of all $n \times n$ matrices over $\C$, by $T_{n}(\C)$ its
subalgebra of all upper triangular matrices, and by $D_{n}(\C)$
its subalgebra of all diagonal matrices. We shall denote the
identity matrix by $I$. Also, $E_{ij}$ is the matrix unit and
$x_{i,j}$ is the $(ij)$th entry of $X \in M_{n}(\C)$ for $1\leq
i,j \leq n$. Hence we have $E_{ii}XE_{jj}=x_{i,j}E_{ij}$ for
$X\in M_{n}(\C)$ and $1\leq i,j \leq n$.
\par
For $n\geq 1$ and a finite sequence of positive
integers $n_{1},n_{2}, \cdots , n_{k}$ ($k\geq 1$) , satisfying $n_{1}+n_{2}+ \cdots + n_{k}=n$, let $\T(n_{1},n_{2}, \cdots , n_{k})$ be the subalgebra of $M_{n}(\C)$ of all  matrices of the form
\[X =\begin{pmatrix}
  X_{11} & \A_{12} & \cdots & X_{1k} \\
  0 & X_{22} & \cdots & X_{2k} \\
  \vdots & \vdots & \ddots  & \vdots \\
  0 & 0 & \cdots & X_{kk}
\end{pmatrix},\]
where $X_{ij}$ is an $n_{i} \times n_{j}$ matrix. We call such an algebra a \emph{block upper triangular matrix
algebra}. Also we call $k$ is the number of \emph{summands of} $\T(n_{1},n_{2}, \cdots , n_{k})$. Note that $M_{n}(\C)$ is a special case of block upper triangular
matrix algebras. In particular, if $k=1$ with $n_{1}=n$, then $\T(n_{1},n_{2}, \cdots , n_{k})=M_{n}(\C)$. Also, when $k=n$ and $n_{i} = 1$ for
every $1\leq i \leq k$ , we have $\T(n_{1},n_{2}, \cdots , n_{k})=T_{n}(\C)$.
\par
Let $F_{1}=\sum_{i=1}^{n_{1}}E_{i}$ and
$F_{j}=\sum_{i=1}^{n_{j}}E_{i+n_{1}+\cdots+n_{j-1}}$ for $2\leq
j\leq k$, where $E_{l}=E_{ll}$. Then
$\{F_{1},\ldots,F_{k}\}$ is a set of non-trivial
idempotents of $\T(n_{1},n_{2}, \cdots , n_{k})$ such that $F_{1}+ \cdots+F_{k}=I$ and $F_{i}F_{j}=F_{j}F_{i}=0$ for $1\leq i,j \leq k$ with $i\neq j$. Moreover, we have
$F_{j}\T(n_{1},n_{2}, \cdots , n_{k})F_{j}\cong M_{n_{j}}(\C)$ for any
$1\leq j\leq k$. We use $\mathcal{D}(n_{1},n_{2}, \cdots , n_{k})$ for a subalgebra of $\T(n_{1},n_{2}, \cdots , n_{k})$ defined by
\[ \mathcal{D}(n_{1},n_{2}, \cdots , n_{k})=F_{1}\T(n_{1},n_{2}, \cdots , n_{k})F_{1}+\cdots+F_{k}\T(n_{1},n_{2}, \cdots , n_{k})F_{k}.\]
Note that, if $\T(n_{1},n_{2}, \cdots , n_{k})=T_{n}(\C)$, then $\mathcal{D}(n_{1},n_{2}, \cdots , n_{k})=D_{n}(\C)$.
\par
By $[X,Y] = XY-YX$ we denote the commutator or the Lie product of elements $X,Y\in M_{n}(\C)$.

%%% ----------------------------------------------------------------------

%**************************************************************************************************************
\section{Main result}
From \cite[Theorem 3.2]{Gh} and the fact that every Jordan
derivation from $\C$ into its bimoduls is zero, we have the
following lemma which will be needed in the proofs of our results.
\begin{lem}\label{full}
Every Jordan derivation from $M_{n}(\mathcal{\C})$, for $n\geq 1$,
into any of its bimodules is a derivation.
\end{lem}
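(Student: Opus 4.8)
The plan is to treat the two regimes $n=1$ and $n\geq 2$ separately, since the first lies outside the scope of the full--matrix--ring theorem and must be handled by hand, while the second is exactly what that theorem delivers. Throughout I would use that a $\C$-linear map is in particular additive, so that the available results on \emph{additive} Jordan derivations apply verbatim and their conclusions (additive derivations) upgrade to $\C$-linear derivations. For $n\geq 2$ the observation is simply that $M_{n}(\C)$ is the full matrix ring of size $n$ over the unital commutative ring $\C$; the bimodules under consideration being $2$-torsion free (as throughout the paper), \cite[Theorem 3.2]{Gh} applies directly and shows that every additive Jordan derivation from $M_{n}(\C)$ into such a bimodule is an additive derivation. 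Combined with $\C$-linearity of $D$, this disposes of the essential case with no further work.

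For $n=1$ we have $M_{1}(\C)\cong\C$, which is commutative, so the Jordan product $ab+ba$ collapses to $2ab$. Evaluating the Jordan identity at $a=b=1$ yields $2D(1)=4D(1)$, and since the bimodule is $2$-torsion free and unital this forces $D(1)=0$. Then $\C$-linearity gives $D(a)=D(a\cdot 1)=aD(1)=0$ for every $a\in\C$, so $D\equiv 0$, which is trivially a derivation. This is precisely the elementary fact alluded to before the statement, namely that every Jordan derivation from $\C$ into its bimodules vanishes, and it is routine.

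The whole substance of the lemma is thus carried by \cite[Theorem 3.2]{Gh}, and that is where the one genuine difficulty resides. If one wished to reprove it rather than cite it, the delicate point would be to pass from the Jordan identity to honest multiplicativity on the matrix units \emph{without ever dividing by} $2$: this is achieved through the relations $E_{ik}=E_{ij}E_{jk}+E_{jk}E_{ij}$ for distinct indices $i,j,k$, together with $E_{ii}E_{jj}+E_{jj}E_{ii}=0$ for $i\neq j$, which express the off-diagonal units as genuine Jordan products with no spurious factor of $2$. Once the derivation property is established on the units, the Peirce decomposition relative to the $E_{ii}$ propagates it to all of $M_{n}(\C)$ by linearity. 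By comparison, the $n=1$ reduction above is immediate.
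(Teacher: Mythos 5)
Your proposal is correct and follows the same route as the paper, which likewise obtains the lemma by citing \cite[Theorem 3.2]{Gh} for the full matrix ring together with the elementary observation that a Jordan derivation from $\C\cong M_{1}(\C)$ into a ($2$-torsion free, unital) bimodule kills $1$ and hence vanishes. Your extra sketch of how \cite{Gh} avoids division by $2$ is not needed but does no harm.
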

In this note, our main result is the following theorem.
\begin{thm}\label{asli}
Let $\T = \T(n_{1},n_{2}, \cdots , n_{k})$ be a block upper
triangular algebras in $M_{n}(\C)$ $(n\geq 1)$ and $\M$ be a
$2$-torsion free unital $\T$-bimodule. Suppose that $\D$ is a
Jordan derivation. Then there exist a derivation $\de$ and an
antiderivation $\al$ such that $D=d+\alpha $ and
$\alpha(\mathcal{D}(n_{1},n_{2}, \cdots , n_{k}))=\{0\}$.
Moreover, $d$ and $\alpha$ are uniquely determined.
\end{thm}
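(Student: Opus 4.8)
The plan is to run a Peirce-decomposition argument relative to the orthogonal idempotents $F_{1},\ldots,F_{k}$. Writing $\T_{ij}=F_{i}\T F_{j}$, we have $\T=\bigoplus_{i\le j}\T_{ij}$ with $\T_{ij}=\{0\}$ for $i>j$ and $\T_{ii}\cong M_{n_{i}}(\C)$, while the unital bimodule splits completely as $\M=\bigoplus_{i,j=1}^{k}F_{i}\M F_{j}$. I would analyze $D$ on each homogeneous piece and then reassemble. Throughout I will use the $2$-torsion freeness of $\M$ to pass freely between the Jordan identity $D(XY+YX)=D(X)Y+XD(Y)+D(Y)X+YD(X)$ and its polarizations, in particular the standard consequence $D(XYX)=D(X)YX+XD(Y)X+XYD(X)$. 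First I would record the idempotent identities: from $D(F_{i})=D(F_{i})F_{i}+F_{i}D(F_{i})$ one gets $F_{i}D(F_{i})F_{i}=0$, and from $D(F_{i}\circ F_{j})=0$ for $i\neq j$ one gets $F_{i}D(F_{j})F_{i}=0$ together with relations controlling the mixed components $F_{i}D(F_{j})F_{l}$. These pin down in which blocks the elements $D(F_{i})$ can live and feed into every later computation.

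Next, the diagonal. I would check that $X\mapsto F_{i}D(X)F_{i}$, restricted to $\T_{ii}$, is a Jordan derivation of $\T_{ii}\cong M_{n_{i}}(\C)$ into the bimodule $F_{i}\M F_{i}$, so that by Lemma~\ref{full} it is a derivation. The module components in positions $i\neq i'$ produced by diagonal elements, and the behaviour of $D$ on the strictly upper blocks $\T_{ij}$ ($i<j$), I would control by feeding $X\in\T_{ij}$ (so $X=F_{i}XF_{j}$ and $X^{2}=0$) into the relations $X\circ F_{i}=X=X\circ F_{j}$ and $D(X\circ X)=0$. These force all but two Peirce components of $D(X)$ to be determined by the $D(F_{l})$, and they separate the surviving data into the \emph{same-position} component $F_{i}D(X)F_{j}\in\M_{ij}$ and the \emph{transposed} component $F_{j}D(X)F_{i}\in\M_{ji}$.

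Then the construction. I would define $\alpha$ to vanish on $\mathcal{D}(n_{1},n_{2}, \cdots , n_{k})$ and to send $X\in\T_{ij}$ ($i<j$) to its transposed component $F_{j}D(X)F_{i}$, extended linearly, and set $d=D-\alpha$. Verifying that $\alpha$ is an antiderivation and $d$ a derivation reduces to checking the two product rules on the matrix units $E_{ij}$ across blocks, which the identities assembled above make routine once organized. For uniqueness, if $D=d'+\alpha'$ is another such splitting, then $h:=d-d'=\alpha'-\alpha$ is simultaneously a derivation and an antiderivation vanishing on $\mathcal{D}(n_{1},n_{2}, \cdots , n_{k})$; applying the derivation rule to $X=F_{i}XF_{j}$ forces $h(X)\in\M_{ij}$, while the antiderivation rule forces $h(X)\in\M_{ji}$, so $h$ annihilates every off-diagonal block, and since it already vanishes on $\mathcal{D}(n_{1},n_{2}, \cdots , n_{k})$ we conclude $h=0$.

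I expect the main obstacle to be the global verification of the antiderivation property of $\alpha$: a product $XY$ with $X\in\T_{ij}$ and $Y\in\T_{jl}$ lands in a third block $\T_{il}$, and confirming $\alpha(XY)=\alpha(Y)X+Y\alpha(X)$ in terms of the transposed components requires that the cross-terms between three distinct blocks cancel exactly. This is precisely where the upper-triangular vanishing $\T_{ij}=\{0\}$ for $i>j$ does the work, and keeping the Peirce bookkeeping consistent across the three indices $i,j,l$ is the delicate part of the argument.
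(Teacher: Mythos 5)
Your overall strategy is viable and genuinely different in organization from the paper's: you work with the full Peirce decomposition over all of $F_{1},\dots,F_{k}$ at once and define $\alpha$ on $\T_{ij}=F_{i}\T F_{j}$ ($i<j$) as the ``transposed component'' $X\mapsto F_{j}D(X)F_{i}$, whereas the paper inducts on the number of blocks, peeling off $P=F_{1}$ against $Q=I-P$ and invoking the induction hypothesis on $Q\T Q$. Your uniqueness argument is correct and in fact cleaner than the paper's. But the existence half has a genuine gap exactly at the point you flag and then defer. For $X\in\T_{ij}$ and $Y\in\T_{jl}$ with $i<j<l$, both $\alpha(Y)X=F_{l}D(Y)F_{j}X$ and $Y\alpha(X)=YF_{j}D(X)F_{i}$ vanish identically (since $F_{j}X=0$ and $YF_{j}=0$), so the antiderivation law does not merely ``require cross-terms to cancel'': it forces $\alpha(XY)=F_{l}D(XY)F_{i}=0$. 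Since every matrix unit of $\T_{il}$ factors as such a product through the intermediate block, your construction only works if you prove outright that the transposed component of $D$ vanishes on every block $\T_{il}$ admitting an intermediate index $j$. Likewise, for $d=D-\alpha$ to be a derivation you need companion vanishing identities of the form $F_{j}D(X)F_{i}\cdot Y=0$ and $Y\cdot F_{j}D(X)F_{i}=0$ for suitable strictly-upper $Y$, so that the transposed components never contaminate the Leibniz rule.

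None of this follows from the identities you actually assemble (the idempotent relations, $X\circ F_{i}$, $X\circ X=0$, and the diagonal-block derivation property from Lemma~\ref{full}). The missing ingredient is the anticommutator relation between two distinct strictly-upper elements sharing a row block, $XY+YX=0$ for $X\in\T_{ij}$, $Y\in\T_{il}$, combined with the factorizations $E_{kl}=E_{ki}E_{il}$ and $E_{il}=E_{ij}E_{jl}$ through the intermediate block and a final appeal to $2$-torsion freeness to conclude $2\,E_{ij}\Delta(E_{kl})F_{1}=0$, hence $E_{ij}\Delta(E_{kl})F_{1}=0$. This is precisely the content of the paper's Step 5 (together with the inductively transported property $PXQ\gamma(QYQ)=0$ of Step 4), and it is the real work of the proof; asserting that the verification is ``routine once organized'' and naming it ``the delicate part'' does not discharge it. Until you supply this computation, $\alpha$ is not shown to be an antiderivation and $d$ is not shown to be a derivation, so the proposal is an outline rather than a proof.
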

%______________________________________________________________________________________________________________
%______________________________________________________________________________________________________________
\begin{proof}
The proof is by induction on $k$, the number of summands of $\T$.
If $k=1$, then $\T=M_{n}(\C)$ and $\mathcal{D}(n_{1})=M_{n}(\C)$.
So by Lemma~\ref{full}, $D$ is derivation and $\alpha=0$ is the
only antiderivation such that $\alpha(\mathcal{D}(n_{1}))=0$.
Hence the result is obvious in this case.
\par
Assume inductively that $k\geq 1$ and the result holds for each
block upper triangular algebra $\T(n_{1},n_{2}, \cdots , n_{k})$
with $k$ summands.
\par
Let $\T = \T(n_{1},n_{2}, \cdots , n_{k+1})\subseteq M_{n}(\C)$ be
a block upper triangular algebra with $n_{1}+n_{2}+ \cdots +
n_{k+1}=n$.
\par
Set $P=F_{1}$ and $Q=I-P=F_{2}+\cdots+F_{k+1}$. Then $P$ and $Q$
are nontrivial idempotents of $\T$ such that $PQ=QP=0$. Also $Q\T
P=\{0\}$, $P\T P$ and $Q\T Q$ are subalgebras of $\T$ with unity
$P$ and $Q$, respectively, and $\T=P\T P \dot{+} P\T Q \dot{+}
Q\T Q$ as sum of $\C$-linear spaces. Moreover, $P\T P\cong
M_{n_{1}}(\C)$ and $Q\T Q\cong \T(n_{2},n_{3}, \cdots ,
n_{k+1})\subseteq M_{n-n_{1}}(\C)$ ($\C$-algebra isomorphisms) is
a block upper triangular algebra with $k$ summands
$n_{2},\cdots,n_{k+1}$, where
$\mathcal{D}(n_{2},\cdots,n_{k+1})\cong F_{2}\T
F_{2}+\cdots+F_{k+1}\T F_{k+1}$.
\par
Suppose $\M$ is a $2$-torsion free unital $\T$-bimodule and $\D$
is a Jordan derivation. Define $\Delta:\T\rightarrow \M$ by
$\Delta(X)=D(X)-I_{B}(X)$, where $B=PD(P)Q-QD(P)P$. Then $\Delta$
is a Jordan derivation such that $P\Delta(P)Q=Q\Delta(P)P=0$. We
will show that $\Delta$ is the sum of a derivation and an
antiderivation.
\par
We complete the proof by checking some steps.
\\ \\
%*************************************************************************************************
\textbf{Step 1.}
$\Delta(X)=P\Delta(PXP)P+P\Delta(PXQ)Q+Q\Delta(PXQ)P+Q\Delta(QXQ)Q$
for all $X \in \T$.
\\ \par
Let $X \in \T$. Since $P(QXQ)+(QXQ)P=0$, we have
\begin{equation}\label{1}
P\Delta(QXQ)+\Delta(P)QXQ+QXQ\Delta(P) + \Delta(QXQ)P=0.
\end{equation}
Multiplying this identity by $P$ both on the left and on the right
we arrive at $2P \Delta(QXQ)P=0$ so $P \Delta(QXQ)P=0$. Now,
multiplying the Equation\eqref{1} from the left by $P$, from the
right by $Q$ and by the fact that $P\Delta(P)Q=0$, we find
$P\Delta(QXQ)Q=0$. Similarly, from Equation\eqref{1} and the fact
that $Q\Delta(P)P=0$, we see that $Q\Delta(QXQ)P=0$. Therefore,
from above equations we get
\begin{equation*}
\Delta(QXQ )=Q\Delta(QXQ)Q
\end{equation*}
Applying $\Delta$ to $(PXP)Q+Q(PXP)=0$, we see that
\begin{equation}\label{2}
PXP\Delta(Q)+\Delta(PXP)Q+Q\Delta(PXP)+\Delta(Q)PXP=0.
\end{equation}
By $\Delta(QXQ )=Q\Delta(QXQ)Q$, Equation\eqref{2} and using
similar methods as above we obtain
\begin{equation*}
\Delta(PXP)=P\Delta(PXP)P.
\end{equation*}
Since $P(PXQ)+(PXQ)P=PXQ$, we have
\begin{equation}\label{3}
P\Delta(PXQ)+\Delta(P)PXQ+PXQ\Delta(P)+\Delta(PXQ)P=\Delta(PXQ).
\end{equation}
Multiplying Equation\eqref{3} by $P$ both on the left and on the
right and by the fact that $Q\Delta(P)P=0$, we get
$P\Delta(PXQ)P=0$. Now multiplying Equation\eqref{3} by $Q$ both
on the left and on the right and by the fact that
$Q\Delta(P)P=0$, we have $Q\Delta(PXQ)Q=0$. Hence from these
equations we find
\begin{equation*}
\Delta(PXQ)=P\Delta(PXQ)Q+Q\Delta(PXQ)P.
\end{equation*}
Now from above results we have
\begin{equation*}
\begin{split}
\Delta(X)&=\Delta(PXP)+\Delta(PXQ)+\Delta(QXQ)\\&=P\Delta(PXP)P+P\Delta(PXQ)Q+Q\Delta(PXQ)P+Q\Delta(QXQ)Q.
\end{split}
\end{equation*}
%**************************************************************************************************************************
\\
\textbf{Step 2.}
$P\Delta(PXPYP)P=PXP\Delta(PYP)P+P\Delta(PXP)PYP$ for all $X,Y
\in \T$.
\\ \par
$P\M P$ is a $2$-torsion free unital $P\T P$-bimodule. Define
$J:P\T P\rightarrow P\M P$ by $J(PXP)=P\Delta(PXP)P$. Clearly $J$
is a well defined linear map. Since $\Delta$ is a Jordan
derivation, it follows that $J$ is a Jordan derivation. By
Lemma~\ref{full} and the fact that $P\T P\cong M_{n_{1}}(\C)$, we
see that $J$ is a derivation. So we obtain the result of this
step.
%****************************************************************************************************************************
\\ \\
\textbf{Step 3.} $P\Delta(PXPYQ)Q=PXP\Delta(PYQ)Q+P\Delta(PXP)PYQ$
and \\ $P\Delta(PXQYQ)\F=PXQ\Delta(QYQ)Q+P\Delta(PXQ)QYQ$ for all
$X,Y \in \T$.
\\ \par
Let $X,Y \in \T$. Applying $\Delta$ to the equations:
$PXPYQ=(PXP)(PYQ)+(PYQ)(PXP)$ and $PXQYQ=(PXQ)(QYQ)+(QYQ)(PXQ)$,
we get
\begin{equation}\label{4}
\begin{split}
&\Delta(PXPYQ)=PXP\Delta(PYQ)+\Delta(PXP)PYQ+PYQ\Delta(PXP)+\Delta(PYQ)PXP
\\& \textrm{and}
\\&
\Delta(PXQYQ)=PXQ\Delta(QYQ)+\Delta(PXQ)QYQ+QYQ\Delta(PXQ)+\Delta(QYQ)PXQ.
\end{split}
\end{equation}
Multiplying these identities by $P$ on the left and by $Q$ on the
right, from Step 1 we yield the result.
%****************************************************************************************************************************
\\ \\
\textbf{Step 4.} There exists a derivation $g:Q\T Q\rightarrow
Q\M Q$ and an antiderivation $\gamma:Q\T Q\rightarrow Q\M Q$ such
that $Q\Delta(QXQ)Q=g(QXQ)+\gamma(QXQ)$ for all $X\in \T$.
Moreover, $\gamma(F_{2}\T F_{2}+\cdots+F_{k+1}\T F_{k+1})=\{0\}$
and $PXQ\gamma(QYQ)=0$ for all $X,Y\in \T$.
\\ \par
$Q\M Q$ is a $2$-torsion free unital $Q\T Q$-bimodule. Define
$G:Q\T Q\rightarrow Q\M Q$ by $G(QXQ)=Q\Delta(QXQ)Q$. Clearly $G$
is a well defined linear map. Since $\Delta$ is a Jordan
derivation, we see that $G$ is a Jordan derivation. In view of the
isomorphisms $Q\T Q\cong \T(n_{2},n_{3}, \cdots ,
n_{k+1})\subseteq M_{n-n_{1}}(\C)$,
$\mathcal{D}(n_{2},\cdots,n_{k+1})\cong F_{2}\T
F_{2}+\cdots+F_{k+1}\T F_{k+1}$ and induction hypothesis, there
exists a derivation $g:Q\T Q\rightarrow Q\M Q$ and an
antiderivation $\gamma:Q\T Q\rightarrow Q\M Q$ such that
$Q\Delta(QXQ)Q=G(QXQ)=g(QXQ)+\gamma(QXQ)$ for all $X\in \T$. Also,
$\gamma(F_{2}\T F_{2}+\cdots+F_{k+1}\T F_{k+1})=\{0\}$. We will
show that $PXQ\gamma(QYQ)=0$ for all $X,Y\in \T$.
\\
By Step 3 and above results for all $X,Y,Z\in \T$, we have
\begin{equation*}
\begin{split}
P\Delta(PXQYQZQ)Q&=PXQ\Delta(QYQZQ)Q+P\Delta(PXQ)QYQZQ\\&=PXQg(QYQZQ)+PXQ\gamma(QYQZQ)\\&+P\Delta(PXQ)QYQZQ.
\end{split}
\end{equation*}
On the other hand,
\begin{equation*}
\begin{split}
P\Delta(PXQYQZQ)Q&=PXQYQ\Delta(QZQ)Q+P\Delta(PXQYQ)QZQ\\&=PXQYQ\Delta(QZQ)Q+PXQ\Delta(QYQ)QZQ\\&+P\Delta(PXQ)QYQZQ\\&=
PXQYQg(QZQ)+PXQYQ\gamma(QZQ)\\&+PXQg(QYQ)QZQ+PXQ\gamma(QYQ)QZQ\\&+P\Delta(PXQ)QYQZQ\\&=
PXQg(QYQZQ)+PXQ\gamma(QZQYQ)\\&+P\Delta(PXQ)QYQZQ,
\end{split}
\end{equation*}
since $g$ is a derivation and $\gamma$ is an antiderivation. By
comparing the two expressions for $P\Delta(PXQYQZQ)Q$, we arrive
at
\begin{equation}\label{5}
PXQ\gamma([QYQ,QZQ])=0,
\end{equation}
for all $X,Y,Z\in \T$. Now from the fact that
$Q=F_{2}+\cdots+F_{k+1}$ and $F_{j}Q=QF_{j}=F_{j}$ for all $2\leq
j \leq k+1$, we have
\begin{equation}\label{6}
\begin{split}
QXQ-\sum_{j=2}^{k+1}F_{j}XF_{j}&=(\sum_{j=2}^{k+1}F_{j})QXQ-
\sum_{j=2}^{k+1}F_{j}XF_{j}=\sum_{j=2}^{k+1}(F_{j}XQ-F_{j}XF_{j})\\&=\sum_{j=2}^{k+1}F_{j}X(Q-F_{j})=\sum_{j=2}^{k+1}[F_{j},F_{j}X(Q-F_{j})],
\end{split}
\end{equation}
for all $X\in \T$. Note that $F_{j}, F_{j}X(Q-F_{j})\in Q\T Q$.
By Equation\eqref{5}, \eqref{6} and $\gamma(F_{2}\T
F_{2}+\cdots+F_{k+1}\T F_{k+1})=\{0\}$, we conclude that
\begin{equation*}
\begin{split}
PXQ\gamma(QYQ)&=PXQ\gamma(QYQ-\sum_{j=2}^{k+1}F_{j}YF_{j}+\sum_{j=2}^{k+1}F_{j}YF_{j})\\&=PXQ\gamma(QYQ-\sum_{j=2}^{k+1}F_{j}YF_{j})\\&
=PXQ\gamma([F_{j},F_{j}Y(Q-F_{j})])=0,
\end{split}
\end{equation*}
for all $X,Y\in \T$.
%****************************************************************************************************************************
\\ \\
\textbf{Step 5.} $PXQ\Delta(PYQ)P=0$ and $Q\Delta(PXQ)PYQ=0$ for
all $X,Y\in \T$.
\\ \par
Multiplying Equations\eqref{4} by $Q$ on the left and by $P$ on
the right, we have
\begin{equation}\label{7}
\begin{split}
&Q\Delta(PXPYQ)P=Q\Delta(PYQ)PXP
\\&
\textrm{and}
\\&
Q\Delta(PXQYQ)P=QYQ\Delta(PXQ)P,
\end{split}
\end{equation}
for all $X,Y\in \T$. Now applying $\Delta$ to
$(PXQ)(PYQ)+(PYQ)(PXQ)=0$ for any $X,Y\in \T$, we see that
\[PXQ\Delta(PYQ)+\Delta(PXQ)PYQ+PYQ\Delta(PXQ)+\Delta(PYQ)PXQ=0.\]
From this identity we get the following equations.
\begin{equation}\label{8}
PXQ\Delta(PYQ)P+PYQ\Delta(PXQ)P=0
\end{equation}
and
\begin{equation}\label{9}
Q\Delta(PXQ)PYQ+Q\Delta(PYQ)PXQ=0
\end{equation}
for all $X,Y\in \T$. Let $1\leq i,k \leq n_{1}$ and $n_{1}<
j,l\leq n$ be arbitrary. By Equations\eqref{7} and
Equation\eqref{8} we have
\begin{equation*}
\begin{split}
E_{ij}\Delta(E_{kl})P&=E_{ij}\Delta(E_{ki}E_{il})P=E_{ij}\Delta(E_{il})E_{ki}\\&=E_{ij}\Delta(E_{ij}E_{jl})E_{ki}=E_{ij}E_{jl}\Delta(E_{ij})E_{ki}
=E_{il}\Delta(E_{ij})E_{ki}\\&=-E_{ij}\Delta(E_{il})E_{ki}=-E_{ij}\Delta(E_{ki}E_{il})P=-E_{ij}\Delta(E_{kl})P,
\end{split}
\end{equation*}
since $E_{ki}\in P\T P$, $E_{ij},E_{il},E_{kl}\in P\T Q$,
$E_{jl}\in Q\T Q $. So $E_{ij}\Delta(E_{kl})P=0$. Also by
Equations\eqref{7} and Equation\eqref{9} we find
\begin{equation*}
\begin{split}
Q\Delta(E_{ij})E_{kl}&=Q\Delta(E_{il}E_{lj})E_{kl}=E_{lj}\Delta(E_{il})E_{kl}\\&=E_{lj}\Delta(E_{ik}E_{kl})E_{kl}=E_{lj}\Delta(E_{kl})E_{ik}E_{kl}
=E_{lj}\Delta(E_{kl})E_{il}\\&=-E_{lj}\Delta(E_{il})E_{kl}=-Q\Delta(E_{il}E_{lj})E_{kl}=-Q\Delta(E_{ij})E_{kl},
\end{split}
\end{equation*}
since $E_{ik}\in P\T P$, $E_{ij},E_{il},E_{kl}\in P\T Q$,
$E_{lj}\in Q\T Q $. Hence $Q\Delta(E_{ij})E_{kl}=0$. For any
$X,Y\in \T$, let
$PXQ=\sum_{i=1}^{n_{1}}\sum_{j=n_{1}+1}^{n}x_{i,j}E_{ij}$ and
$PYQ=\sum_{k=1}^{n_{1}}\sum_{l=n_{1}+1}^{n}y_{k,l}E_{kl}$.
Therefore, by identities $E_{ij}\Delta(E_{kl})P=0$,
$Q\Delta(E_{ij})E_{kl}=0$ and linearity of $\Delta$ it follows
that
\begin{equation*}
\begin{split}
PXQ\Delta(PYQ)P&=\sum_{i=1}^{n_{1}}\sum_{j=n_{1}+1}^{n}x_{i,j}E_{ij}\Delta(\sum_{k=1}^{n_{1}}\sum_{l=n_{1}+1}^{n}y_{k,l}E_{kl})P\\&=
\sum_{i=1}^{n_{1}}\sum_{j=n_{1}+1}^{n}\sum_{k=1}^{n_{1}}\sum_{l=n_{1}+1}^{n}x_{i,j}y_{k,l}E_{ij}\Delta(E_{kl})P=0,
\end{split}
\end{equation*}
and
\begin{equation*}
\begin{split}
Q\Delta(PXQ)PYQ&=\sum_{k=1}^{n_{1}}\sum_{l=n_{1}+1}^{n}Q\Delta(\sum_{i=1}^{n_{1}}\sum_{j=n_{1}+1}^{n}x_{i,j}E_{ij})y_{k,l}E_{kl}\\&=
\sum_{k=1}^{n_{1}}\sum_{l=n_{1}+1}^{n}\sum_{i=1}^{n_{1}}\sum_{j=n_{1}+1}^{n}x_{i,j}y_{k,l}Q\Delta(E_{ij})E_{kl}=0.
\end{split}
\end{equation*}
%****************************************************************************************************************************
\\ \\
\textbf{Step 6.} The mapping $\delta:\T\rightarrow \M$, given by
$$\delta(X)=P\Delta(PXP)P+P\Delta(PXQ)Q+g(QXQ)$$ is a derivation
and the mapping $\alpha:\T\rightarrow \M$, given by
$$\alpha(X)=Q\Delta(PXQ)P+\gamma(QXQ)$$ is an antiderivation such
that $\alpha(\mathcal{D}(n_{1},n_{2}, \cdots , n_{k+1}))=\{0\}$.
Moreover, $\Delta=\delta+\alpha$.
\\ \par
Clearly, $\delta$ is a linear map. By Steps 2, 3, 4 and the fact
that $Q\T P=\{0\}$ one can check directly that $\delta$ is a
derivation.
\\
It is clear that $\alpha$ is a linear map. For each $X,Y\in \T$,
by Equations\eqref{7}, Steps 4, 5 and the fact that $Q\T P=\{0\}$,
we have
\begin{equation*}
\begin{split}
\alpha(XY)&=Q\Delta(PXPYQ)P+Q\Delta(PXQYQ)P+\gamma(QXQYQ)\\&=Q\Delta(PYQ)PXP+QYQ\Delta(PXQ)P+QYQ\gamma(QXQ)+\gamma(QYQ)QXQ\\&+
Q\Delta(PYQ)PXQ+PYQ\Delta(PXQ)P+PYQ\gamma(QXQ)+\gamma(QYQ)QXP\\&=
Y\alpha(X)+\alpha(Y)X.
\end{split}
\end{equation*}
Let
$F_{1}X_{1}F_{1}+F_{2}X_{2}F_{2}+\cdots+F_{k+1}X_{k+1}F_{k+1}$ be
an arbitrary element of $\mathcal{D}(n_{1},n_{2}, \cdots ,
n_{k+1})$. Since $\gamma(F_{2}\T F_{2}+\cdots+F_{k+1}\T
F_{k+1})=\{0\}$, $F_{1}Q=QF_{1}=0$, $PF_{j}=F_{j}P=0$ and
$F_{j}Q=QF_{j}=F_{j}$ for any $2\leq j \leq k$, it follows that
\begin{equation*}
\begin{split}
&\alpha(F_{1}X_{1}F_{1}+F_{2}X_{2}F_{2}+\cdots+F_{k+1}X_{k+1}F_{k+1})\\&=
Q\Delta(P(F_{1}X_{1}F_{1}+F_{2}X_{2}F_{2}+\cdots+F_{k+1}X_{k+1}F_{k+1})Q)P\\&
+\gamma(Q(F_{1}X_{1}F_{1}+F_{2}X_{2}F_{2}+\cdots+F_{k+1}X_{k+1}F_{k+1})Q)\\&=
\gamma(F_{2}X_{2}F_{2}+\cdots+F_{k+1}X_{k+1}F_{k+1})=0.
\end{split}
\end{equation*}
So $\alpha(\mathcal{D}(n_{1},n_{2}, \cdots , n_{k+1}))=\{0\}$. By
Steps 1, 4, it is obvious that $\Delta=\delta+\alpha$.
\\ \par
Now from the above results we have
$D-I_{B}=\Delta=\delta+\alpha$, where $\delta:\T\rightarrow \M $
is a derivation, $\alpha:\T\rightarrow \M $ is an antiderivation
and $\alpha(\mathcal{D}(n_{1},n_{2}, \cdots , n_{k+1}))=\{0\}$. So
the mapping $d:\T\rightarrow \M$ given by $d=\delta+I_{B}$ is a
derivation and we find $D=d+\alpha$.
\par
Finally, we will show that $d$ and $\alpha$ are uniquely
determined. Suppose that $D=d^{\prime}+\alpha^{\prime}$, where
$d^{\prime}:\T\rightarrow \M $ is a derivation,
$\alpha^{\prime}:\T\rightarrow \M $ is an antiderivation and
$\alpha^{\prime}(\mathcal{D}(n_{1},n_{2}, \cdots ,
n_{k+1}))=\{0\}$. Hence $D_{|Q\T Q}:Q\T Q\rightarrow \M$, the
restriction of $D$ to $Q\T Q$, is a Jordan derivation. So
$D_{|Q\T Q}=d_{|Q\T Q}+\alpha_{|Q\T Q}=d^{\prime}_{|Q\T
Q}+\alpha^{\prime}_{|Q\T Q}$, where $d_{|Q\T Q},d^{\prime}_{|Q\T
Q}:Q\T Q\rightarrow \M $ are derivations, $\alpha_{|Q\T Q},
\alpha^{\prime}_{|Q\T Q}:Q\T Q\rightarrow \M $ are antiderivations
and $\alpha_{|Q\T Q}, \alpha^{\prime}_{|Q\T Q}(F_{2}\T
F_{2}+\cdots+F_{k+1}\T F_{k+1})=\{0\}$. Since $Q\T Q\cong
\T(n_{2},n_{3}, \cdots , n_{k+1})\subseteq M_{n-n_{1}}(\C)$,
$\mathcal{D}(n_{2},\cdots,n_{k+1})\cong F_{2}\T
F_{2}+\cdots+F_{k+1}\T F_{k+1}$, by the uniqueness in induction
hypothesis it follows that $\alpha_{|Q\T Q}=\alpha^{\prime}_{|Q\T
Q}$ and $d_{|Q\T Q}=d^{\prime}_{|Q\T Q}$. Define
$\beta:\T\rightarrow \M$ by $\beta=\alpha-\alpha^{\prime}$.
Clearly $\beta$ is a linear map and
$\beta=\alpha-\alpha^{\prime}=d^{\prime}-d$. So $\beta$ is a
derivation and an antiderivation. Since
$\alpha(\mathcal{D}(n_{1},n_{2}, \cdots ,
n_{k+1}))=\alpha^{\prime}(\mathcal{D}(n_{1},n_{2}, \cdots ,
n_{k+1}))=\{0\}$, it follows that
$\alpha(PXP)=\alpha^{\prime}(PXP)=0$ for all $X\in \T$, so
$\beta(PXP)=0$ for all $X\in \T$. Also from $\alpha_{|Q\T
Q}=\alpha^{\prime}_{|Q\T Q}$, we have $\beta(QXQ)=0$ for all
$X\in \T$. Now observe that $\beta(P)=\beta(Q)=0$. Then, since
$\beta$ is a derivation and an antiderivation, we have
\begin{equation*}
\begin{split}
         \beta(PXQ)& =P\beta(XQ)+\beta(P)XQ=P\beta(XQ)\\ & =
         P(Q\beta(X)+\beta(Q)X)=0.
\end{split}
\end{equation*}
 So
$$\beta(X)=\beta(PXP)+\beta(PXQ)+\beta(QXQ)=0$$
for all $X\in \T$. Therefore, $\alpha=\alpha^{\prime}$ and hence
$d=d^{\prime}$. The proof of Theorem~\ref{asli} is thus completed.
\end{proof}
We have the following corollary, which was proved in \cite{Ben0}.
\begin{cor}
Let $T_{n}(\C)$ be an upper triangular matrix algebra and $\M$ be
a $2$-torsion free unital $T_{n}(\C)$-bimodule. Suppose that
$D:T_{n}(\C)\rightarrow \M$ is a Jordan derivation. Then there
exist a derivation $d:T_{n}(\C)\rightarrow \M$ and an
antiderivation $\alpha:T_{n}(\C)\rightarrow \M$ such that
$D=d+\alpha $ and $\alpha(D_{n}(\C))=\{0\}$. Moreover, $d$ and
$\alpha$ are uniquely determined.
\end{cor}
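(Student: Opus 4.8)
The plan is to induct on the number of diagonal blocks. For the base case, a single block gives $\T=M_{n}(\C)$ with $\mathcal{D}(n_{1})=M_{n}(\C)$, so Lemma~\ref{full} already says $D$ is a derivation and one takes $\alpha=0$. For the inductive step I would peel off the first block: set $P=F_{1}$ and $Q=I-P=F_{2}+\cdots+F_{k+1}$, so that $\T=P\T P\,\dot{+}\,P\T Q\,\dot{+}\,Q\T Q$ with $Q\T P=\{0\}$ by upper-triangularity. The decisive structural fact is that $P\T P\cong M_{n_{1}}(\C)$ while $Q\T Q\cong\T(n_{2},\ldots,n_{k+1})$ is again a block upper triangular algebra, but with one fewer block, so the induction hypothesis applies to $Q\T Q$. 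Before anything else I would normalize: replace $D$ by $\Delta=D-I_{B}$ with $B=PD(P)Q-QD(P)P$, arranging $P\Delta(P)Q=Q\Delta(P)P=0$. Since subtracting an inner derivation preserves the Jordan property and a derivation stays a derivation after adding $I_{B}$ back, it suffices to decompose $\Delta$.

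Next I would read off the four Peirce components of $\Delta$. Applying $\Delta$ to the Jordan products $P(QXQ)+(QXQ)P=0$ and $(PXP)Q+Q(PXP)=0$ and projecting onto corners shows that $\Delta$ carries the $P$-corner into $P\M P$, the $Q$-corner into $Q\M Q$, and $\Delta(PXQ)$ into $P\M Q+Q\M P$; this is essentially the content I would establish as a first reduction. Restricting to $P\T P\cong M_{n_{1}}(\C)$ and invoking Lemma~\ref{full} shows $PXP\mapsto P\Delta(PXP)P$ is a derivation. Restricting to $Q\T Q$ and invoking the induction hypothesis gives $Q\Delta(QXQ)Q=g(QXQ)+\gamma(QXQ)$ with $g$ a derivation, $\gamma$ an antiderivation, and $\gamma(F_{2}\T F_{2}+\cdots+F_{k+1}\T F_{k+1})=\{0\}$. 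I would then propose $\delta(X)=P\Delta(PXP)P+P\Delta(PXQ)Q+g(QXQ)$ as the derivation and $\alpha(X)=Q\Delta(PXQ)P+\gamma(QXQ)$ as the antiderivation, so that $\Delta=\delta+\alpha$ by the component analysis.

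The hard part is verifying that these candidates actually multiply correctly, which forces several cross-terms to vanish: $PXQ\,\gamma(QYQ)=0$, $PXQ\,\Delta(PYQ)P=0$, and $Q\Delta(PXQ)\,PYQ=0$ for all $X,Y\in\T$. For the first I would compute $P\Delta(PXQYQZQ)Q$ two different ways, using the module-type identities coming from applying $\Delta$ to $(PXP)(PYQ)+(PYQ)(PXP)$ and $(PXQ)(QYQ)+(QYQ)(PXQ)$; since $g$ is a derivation and $\gamma$ an antiderivation, comparing the two expressions yields $PXQ\,\gamma([QYQ,QZQ])=0$. Then, writing $QYQ-\sum_{j}F_{j}YF_{j}=\sum_{j}[F_{j},F_{j}Y(Q-F_{j})]$ as a sum of commutators in $Q\T Q$ on whose diagonal corners $\gamma$ vanishes, I would deduce $PXQ\,\gamma(QYQ)=0$. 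For the remaining two vanishings I would apply $\Delta$ to $(PXQ)(PYQ)+(PYQ)(PXQ)=0$, combine it with $Q\Delta(PXPYQ)P=Q\Delta(PYQ)PXP$ and its analogue, and run an explicit computation on matrix units $E_{ij}$ with $i\le n_{1}<j$ to force $E_{ij}\Delta(E_{kl})P=0$ and $Q\Delta(E_{ij})E_{kl}=0$, extending to general $PXQ,PYQ$ by linearity. I expect this matrix-unit bookkeeping, together with the commutator reduction, to be the main technical obstacle.

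Finally I would conclude: $\alpha(\mathcal{D}(n_{1},\ldots,n_{k+1}))=\{0\}$ follows because $\alpha$ kills the $P$-corner and $\gamma$ kills the diagonal $Q$-corners. Setting $d=\delta+I_{B}$ recovers $D=d+\alpha$. For uniqueness, given a second decomposition $D=d'+\alpha'$ with $\alpha'$ vanishing on the diagonal subalgebra, I would restrict to $Q\T Q$ and use the uniqueness in the induction hypothesis to get $\alpha_{|Q\T Q}=\alpha'_{|Q\T Q}$ and $d_{|Q\T Q}=d'_{|Q\T Q}$. Then $\beta=\alpha-\alpha'=d'-d$ is simultaneously a derivation and an antiderivation; it vanishes on the $P$-corner and the $Q$-corner, hence $\beta(P)=\beta(Q)=0$, and the Peirce rules force $\beta(PXQ)=P\beta(XQ)=P(Q\beta(X)+\beta(Q)X)=0$, so $\beta=0$ and the decomposition is unique.
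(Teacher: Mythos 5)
Your proposal is correct and follows essentially the same route as the paper: you prove the general block upper triangular case by induction on the number of blocks (peeling off $P=F_{1}$, normalizing by the inner derivation $I_{B}$, using Lemma~\ref{full} on $P\T P$ and the induction hypothesis on $Q\T Q$, killing the cross-terms via the commutator identity and the matrix-unit computation, and deducing uniqueness by restriction to $Q\T Q$), and the corollary is the specialization $k=n$, $n_{i}=1$. This matches the paper's Theorem~\ref{asli} and its proof step for step.
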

%-----------------------------------------------------------------
\subsection*{Acknowledgment}
% ------------------------------------------------------------------------
%GATHER{Xbib.bib}   % For Gather Purpose Only
%GATHER{Paper.bbl}  % For Gather Purpose Only
\bibliographystyle{amsplain}
\bibliography{xbib}

\end{document}